\DeclareSymbolFont{largesymbols}{OMX}{cmex}{m}{n}
\def\Ddots{\mathinner{\mkern1mu\raise\p@
\vbox{\kern7\p@\hbox{.}}\mkern2mu
\raise4\p@\hbox{.}\mkern2mu\raise7\p@\hbox{.}\mkern1mu}}
\def\XXint#1#2#3{{\setbox0=\hbox{$#1{#2#3}{\int}$}
\vcenter{\hbox{$#2#3$}}\kern-.5\wd0}}
\begin{document}

\newtheorem{hyp}{Hypothesis}

\newtheorem{hyp2}[hyp]{Hypothesis}

\newtheorem{definition}{Definition}
\newtheorem{theorem}[definition]{Theorem}
\newtheorem{proposition}[definition]{Proposition}
\newtheorem{conjecture}[definition]{Conjecture}
\def\theconjecture{\unskip}
\newtheorem{corollary}[definition]{Corollary}
\newtheorem{lemma}[definition]{Lemma}
\newtheorem{claim}[definition]{Claim}
\newtheorem{sublemma}[definition]{Sublemma}
\newtheorem{observation}[definition]{Observation}
\theoremstyle{definition}

\newtheorem{notation}[definition]{Notation}
\newtheorem{remark}[definition]{Remark}
\newtheorem{question}[definition]{Question}

\newtheorem{example}[definition]{Example}
\newtheorem{problem}[definition]{Problem}
\newtheorem{exercise}[definition]{Exercise}
 \newtheorem{thm}{Theorem}
 \newtheorem{cor}[thm]{Corollary}
 \newtheorem{lem}{Lemma}[section]
 \newtheorem{prop}[thm]{Proposition}
 \theoremstyle{definition}
 \newtheorem{dfn}[thm]{Definition}
 \theoremstyle{remark}
 \newtheorem{rem}{Remark}
 \newtheorem{ex}{Example}
 \numberwithin{equation}{section}

\def\C{\mathbb{C}}
\def\R{\mathbb{R}}
\def\Rn{{\mathbb{R}^n}}
\def\Rns{{\mathbb{R}^{n+1}}}
\def\Sn{{{S}^{n-1}}}
\def\M{\mathbb{M}}
\def\N{\mathbb{N}}
\def\Q{{\mathbb{Q}}}
\def\Z{\mathbb{Z}}
\def\X{\mathbb{X}}
\def\Y{\mathbb{Y}}
\def\F{\mathcal{F}}
\def\L{\mathcal{L}}
\def\S{\mathcal{S}}
\def\supp{\operatorname{supp}}
\def\essi{\operatornamewithlimits{ess\,inf}}
\def\esss{\operatornamewithlimits{ess\,sup}}

\numberwithin{equation}{section}
\numberwithin{thm}{section}
\numberwithin{definition}{section}
\numberwithin{equation}{section}

\def\earrow{{\mathbf e}}
\def\rarrow{{\mathbf r}}
\def\uarrow{{\mathbf u}}
\def\varrow{{\mathbf V}}
\def\tpar{T_{\rm par}}
\def\apar{A_{\rm par}}

\def\reals{{\mathbb R}}
\def\torus{{\mathbb T}}
\def\t{{\mathcal T}}
\def\heis{{\mathbb H}}
\def\integers{{\mathbb Z}}
\def\z{{\mathbb Z}}
\def\naturals{{\mathbb N}}
\def\complex{{\mathbb C}\/}
\def\distance{\operatorname{distance}\,}
\def\support{\operatorname{support}\,}
\def\dist{\operatorname{dist}\,}
\def\Span{\operatorname{span}\,}
\def\degree{\operatorname{degree}\,}
\def\kernel{\operatorname{kernel}\,}
\def\dim{\operatorname{dim}\,}
\def\codim{\operatorname{codim}}
\def\trace{\operatorname{trace\,}}
\def\Span{\operatorname{span}\,}
\def\dimension{\operatorname{dimension}\,}
\def\codimension{\operatorname{codimension}\,}
\def\nullspace{\scriptk}
\def\kernel{\operatorname{Ker}}
\def\ZZ{ {\mathbb Z} }
\def\p{\partial}
\def\rp{{ ^{-1} }}
\def\Re{\operatorname{Re\,} }
\def\Im{\operatorname{Im\,} }
\def\ov{\overline}
\def\eps{\varepsilon}
\def\lt{L^2}
\def\diver{\operatorname{div}}
\def\curl{\operatorname{curl}}
\def\etta{\eta}
\newcommand{\norm}[1]{ \|  #1 \|}
\def\expect{\mathbb E}
\def\bull{$\bullet$\ }

\def\blue{\color{blue}}
\def\red{\color{red}}

\def\xone{x_1}
\def\xtwo{x_2}
\def\xq{x_2+x_1^2}
\newcommand{\abr}[1]{ \langle  #1 \rangle}

\newcommand{\Norm}[1]{ \left\|  #1 \right\| }
\newcommand{\set}[1]{ \left\{ #1 \right\} }
\newcommand{\ifou}{\raisebox{-1ex}{$\check{}$}}
\def\one{\mathbf 1}
\def\whole{\mathbf V}
\newcommand{\modulo}[2]{[#1]_{#2}}
\def \essinf{\mathop{\rm essinf}}
\def\scriptf{{\mathcal F}}
\def\scriptg{{\mathcal G}}
\def\m{{\mathcal M}}
\def\scriptb{{\mathcal B}}
\def\scriptc{{\mathcal C}}
\def\scriptt{{\mathcal T}}
\def\scripti{{\mathcal I}}
\def\scripte{{\mathcal E}}
\def\V{{\mathcal V}}
\def\scriptw{{\mathcal W}}
\def\scriptu{{\mathcal U}}
\def\scriptS{{\mathcal S}}
\def\scripta{{\mathcal A}}
\def\scriptr{{\mathcal R}}
\def\scripto{{\mathcal O}}
\def\scripth{{\mathcal H}}
\def\scriptd{{\mathcal D}}
\def\scriptl{{\mathcal L}}
\def\scriptn{{\mathcal N}}
\def\scriptp{{\mathcal P}}
\def\scriptk{{\mathcal K}}
\def\frakv{{\mathfrak V}}
\def\v{{\mathcal V}}
\def\C{\mathbb{C}}
\def\D{\mathcal{D}}
\def\R{\mathbb{R}}
\def\Rn{{\mathbb{R}^n}}
\def\rn{{\mathbb{R}^n}}
\def\Rm{{\mathbb{R}^{2n}}}
\def\r2n{{\mathbb{R}^{2n}}}
\def\Sn{{{S}^{n-1}}}
\def\bbM{\mathbb{M}}
\def\N{\mathbb{N}}
\def\Q{{\mathcal{Q}}}
\def\Z{\mathbb{Z}}
\def\F{\mathcal{F}}
\def\L{\mathcal{L}}
\def\G{\mathscr{G}}
\def\ch{\operatorname{ch}}
\def\supp{\operatorname{supp}}
\def\dist{\operatorname{dist}}
\def\essi{\operatornamewithlimits{ess\,inf}}
\def\esss{\operatornamewithlimits{ess\,sup}}
\def\dis{\displaystyle}
\def\dsum{\displaystyle\sum}
\def\dint{\displaystyle\int}
\def\dfrac{\displaystyle\frac}
\def\dsup{\displaystyle\sup}
\def\dlim{\displaystyle\lim}
\def\bom{\Omega}
\def\om{\omega}

\author[H. Yang]{Heng Yang}
\address{Heng Yang:
	College of Mathematics and System Sciences \\
	Xinjiang University \\
	Urumqi 830017 \\
	China}
\email{yanghengxju@yeah.net}

\author[J. Zhou]{Jiang Zhou$^{*}$}
\address{Jiang Zhou:
	College of Mathematics and System Sciences \\
	Xinjiang University \\
	Urumqi 830017 \\
	China}
\email{zhoujiang@xju.edu.cn}

\keywords{Fractional maximal operator, Sharp maximal operator,
Commutator, Lipschitz function, Slice space \\
\indent{{\it {2020 Mathematics Subject Classification.}}} 42B25, 42B35, 47B47, 46E30, 26A16.}

\thanks{This work was supported by the National Natural Science Foundation of China (No.12061069).
\thanks{$^{*}$ Corresponding author, e-mail address: zhoujiang@xju.edu.cn}}

\date{\today}
\title[ CHARACTERIZATION OF LIPSCHITZ FUNCTIONS VIA COMMUTATORS  ]
{\bf Characterization of Lipschitz functions via commutators of maximal operators on slice spaces}

\begin{abstract}
Let $0 \leq \alpha<n$, $M_{\alpha}$ be the fractional maximal operator, $M^{\sharp}$ be the sharp maximal operator and $b$ be the locally integrable function. Denote by $[b, M_{\alpha}]$ and $[b, M^{\sharp}]$  be the commutators of the fractional maximal operator  $M_{\alpha}$  and the sharp maximal operator $M^{\sharp}$. In this paper, we show some necessary and sufficient conditions for the boundedness of the commutators $[b, M_{\alpha}]$  and  $[b, M^{\sharp}]$  on slice spaces when the function $b$ is the Lipschitz function, by which some new characterizations of the non-negative Lipschitz function are obtained.
\end{abstract}

\maketitle

\section{Introduction and main results}
Let $T$ be the classical singular integral operator and $b$ be the locally integrable function, the commutator $[b, T]$ is defined by
$$
[b,T]f(x)=bT f(x)-T (bf)(x).
$$
The well-known result of Coifman, Rochberg and Weiss\cite{CRW} showed that the commutator $[b,T]$ is bounded on $L^{p}(\mathbb{R}^{n})$ for $1<p<\infty$ if and only if $b \in BMO(\mathbb{R}^{n})$.
The bounded mean oscillation space $BMO(\mathbb{R}^{n})$ was introduced by John and Nirenberg \cite{JN}, which is defined as the set of all locally integrable functions $f$ on $\mathbb{R}^{n}$ such that
$$
\|f\|_{BMO(\mathbb{R}^{n})}:=\sup _{Q} \frac{1}{|Q|} \int_{Q}\left|f(x)-f_{Q}\right| d x<\infty,
$$
where the supremum is taken over all cubes in $\mathbb{R}^{n}$ and  $f_{Q}:=\frac{1}{|Q|} \int_{Q} f(x) d x$.
In 1978, Janson\cite{J} abtained some characterizations of the Lipschitz space $\dot{\Lambda}_{\beta}\left(\mathbb{R}^{n}\right)$ via the commutator $[b, T]$ and proved that $[b, T]$ is bounded from $L^{p}\left(\mathbb{R}^{n}\right)$ to $L^{q}\left(\mathbb{R}^{n}\right)$ if and only if $b \in \dot{\Lambda}_{\beta}\left(\mathbb{R}^{n}\right)(0<$ $\beta<1)$, where $1<p<$ $n / \beta$ and $1 / p-1 / q=\beta / n$ (see also Paluszy\'{n}ski\cite{P}). Later, the commutators have been studied intensively by many authors(see, for example, \cite{HZ,L,S,ZS} ), which plays an important role in studying the solution of partial differential equations.

As usual, a cube $Q \subset \mathbb{R}^{n}$ always means its sides parallel to the coordinate axes. Denote by $|Q|$ the Lebesgue measure of $Q$ and $\chi_{Q}$ the characteristic function of $Q$. For $1\leq p\leq\infty$, we denote by $p^{\prime}$ the conjugate index of $p$, namely, $p^{\prime}=p /(p-1)$. We always denote by  $C$  a positive constant which is independent of the
main parameters, but it may vary from line to line. The symbol  $f \lesssim g$  means that  $f \leq C g$ . If  $f \lesssim g $ and  $g \lesssim f$ , we then write  $f \sim g $.

Let $0 \leq \alpha<n$, for a locally integrable function $f$, the maximal operator $M_{\alpha}$ is given by
$$M_{\alpha}(f)(x)=\sup _{Q \ni x} \frac{1}{|Q|^{1-\alpha/n}} \int_{Q}|f(y)| d y,$$
where the supremum is taken over all cubes $Q \subset \mathbb{R}^{n}$ containing $x$.

When $\alpha=0$, $M_{0}$ is the classical Hardy-Littlewood maximal operator $M$, and $M_{\alpha}$ is the classical fractional maximal operator when $0<\alpha<n$.

The sharp maximal operator $M^{\sharp}$ was introduced by Fefferman and Stein \cite{FS}, which is defined as
$$
M^{\sharp} f(x)=\sup _{Q \ni x} \frac{1}{|Q|} \int_Q\left|f(y)-f_Q\right| d y,
$$
where the supremum is taken over all cubes $Q \subset \mathbb{R}^{n}$ containing $x$.

The maximal commutator of the fractional maximal operator $M_\alpha$ with the locally integrable function $b$ is given by
$$
M_{\alpha, b}(f)(x)=\sup _{Q \ni x} \frac{1}{|Q|^{1-\alpha / n}} \int_Q|b(x)-b(y)||f(y)| d y,
$$
where the supremum is taken over all cubes $Q \subset \mathbb{R}^n$ containing $x$.

The  nonlinear  commutators of the fractional maximal operator $M_\alpha$ and sharp maximal operator $M^{\sharp}$ with the locally integrable function $b$ are defined as
$$
\left[b, M_\alpha\right](f)(x)=b(x) M_\alpha(f)(x)-M_\alpha(b f)(x)
$$
and
$$
\left[b, M^{\sharp}\right](f)(x)=b(x) M^{\sharp}(f)(x)-M^{\sharp}(b f)(x) .
$$

When $\alpha=0$, we simply write by $[b, M]=\left[b, M_0\right]$ and $M_b=M_{0, b}$.
We also remark that the commutators $M_{\alpha, b}$ and $\left[b, M_\alpha\right]$ essentially differ from each other. For example, maximal commutator $M_{\alpha, b}$ is positive and sublinear, but nonlinear commutators $\left[b, M_\alpha\right]$ and $\left[b, M^{\sharp}\right]$ are neither positive nor sublinear.

To state our results, we first present some definitions and notations.
\begin{definition}\label{de1} Let $0<\beta<1$, we say a function $b$ belongs to the Lipschitz space $\dot{\Lambda}_\beta(\mathbb{R}^n)$ if there exists a constant $C$ such that for all $x, y \in \mathbb{R}^n$,
$$
|b(x)-b(y)| \leq C|x-y|^\beta.
$$
The smallest such constant $C$ is called the $\dot{\Lambda}_\beta$ norm of the function $b$ and is denoted by $\|b\|_{\dot{\Lambda}_\beta}$.
\end{definition}

In 2019, Auscher and Mourgoglou \cite{AM} introduced the slice space $(E_{2}^{p})_{t}(\mathbb{R}^{n})$ with $0<t<\infty$ and  $ 1<p<\infty$, they studied the weak solutions of boundary value problems with a $t$-independent elliptic systems in the upper half plane.
Recently, Auscher and Prisuelos-Arribas\cite{AP} obtained the boundedness of some classical operators on the slice space $(E_{r}^{p})_{t}(\mathbb{R}^{n})$  with  $0<t<\infty$  and  $1<p, r<\infty$.

For $0<p<\infty$, the Lebesgue space  $L^{p}(\mathbb{R}^{n})$  is defined as the set of all measurable functions  $f$  on  $\mathbb{R}^{n}$  such that
$$
\|f\|_{L^p(\mathbb{R}^n)}:=\left(\int_{\mathbb{R}^n}|f(x)|^p \mathrm{~d} x\right)^{\frac{1}{p}}<\infty.
$$
\begin{definition} Let $0<t<\infty$ and $1<r, p<\infty$. The slice space $(E_{r}^{p})_{t}(\mathbb{R}^{n})$ is defined as the set of all locally $r$-integrable functions $f$ on $\mathbb{R}^n$ such that
$$
\|f\|_{(E_{r}^{p})_{t}(\mathbb{R}^{n})}:=\left(\int_{\mathbb{R}^n}\left(\frac{1}{|Q(x, t)|}\int_{Q(x, t)}|f(y)|^{r} d y\right)^{\frac{p}{r}} d x\right)^{\frac{1}{p}}<\infty.
$$
\end{definition}

If we take $r=p$, then the slice space $(E_{r}^{p})_{t}(\mathbb{R}^{n})$ is the Lebesgue space  $L^{p}(\mathbb{R}^{n})$.
For a cube $Q$, we denote by $\|f\|_{(E_r^p)_t(Q)}=\|f\chi_{Q}\|_{(E_r^p)_t(\mathbb{R}^n)}$.

For a fixed cube $Q$ and  $0 \leq \alpha< n$, the maximal operator with respect to $Q$ of a function $f$ is given by
$$
M_{\alpha,Q}(f)(x)=\sup _{Q \supseteq Q_0 \ni x} \frac{1}{|Q_0|^{1-\alpha/n}} \int_{Q_0}|f(y)| dy,
$$
where the supremum is taken over all the cubes $Q_0$ with $Q_0 \subseteq Q$ and $Q_0 \ni x$. Moreover, we denote by $M_{Q} = M_{0,Q}$ when $\alpha= 0$.

In 2017, Zhang \cite{Z} showed some characterizations via the
boundedness of the commutator $[b, M]$ on Lebesgue spaces, when the function $b$ belongs to Lipschitz spaces.

\hspace{-1.2em}{\bf Theorem A.}(\cite{Z}). Let $0<\beta<1$ and $b$ be a locally integrable function. If $1<p<n / \beta$ and $1 / q=1 / p-\beta / n$, then the following statements are equivalent:

(1) $b \in \dot{\Lambda}_\beta(\mathbb{R}^n)$ and $b \geq 0$;

(2) $[b, M]$ is bounded from $L^p(\mathbb{R}^n)$ to $L^q(\mathbb{R}^n)$;

(3) there exists a constant $C>0$ such that
$$
\sup _Q \frac{1}{|Q|^{\beta / n}}\left(\frac{1}{|Q|} \int_Q\left|b(x)-M_Q(b)(x)\right|^q \mathrm{~d} x\right)^{1 / q} \leq C .
$$

Next, we recall the result of \cite{YZ}, which showed some characterizations via the
boundedness of the commutator $[b, M]$ on slice spaces, when the function $b$ belongs to Lipschitz spaces.\\
{\bf Theorem B.} (\cite{YZ}).
Let $0<\beta<1$, $0<t<\infty$ and $b$ be a locally integrable function. If $1<p<r<\infty$, $1<q<s<\infty$ and $\beta/n=1/p-1/r=1/q-1/s$,
then the following statements are equivalent:

(1) $b \in \dot{\Lambda}_{\beta}\left(\mathbb{R}^{n}\right)$ and $b \geq 0$.

(2) $[b, M]$ is bounded from $(E_{p}^{q})_{t}(\mathbb{R}^{n})$ to $(E_{r}^{s})_{t}(\mathbb{R}^{n})$.

(3) There exists a constant $C>0$ such that
\begin{equation} \label{eq1}
\sup _{Q} \frac{1}{|Q|^{\beta/n+1/s}} \|b(\cdot)-M_{Q}(b)(\cdot)\|_{(E_{r}^{s})_{t}(Q)}\leq C.\notag
\end{equation}

Our first result can be stated as follows.

\begin{theorem} \label{TH1.1}Let $0<\beta<1$, $0 \leq \alpha<n$, $0 < \alpha+\beta<n$, $0<t<\infty$ and $b$ be a locally integrable function. If $1<p<r<\infty$, $1<q<s<\infty$ and $(\alpha+\beta)/n=1/p-1/r=1/q-1/s$, then the following statements are equivalent:

(1) $b \in \dot{\Lambda}_{\beta}\left(\mathbb{R}^{n}\right)$ and $b \geq 0$.

(2) $[b, M_{\alpha}]$ is bounded from $(E_{p}^{q})_{t}(\mathbb{R}^{n})$ to $(E_{r}^{s})_{t}(\mathbb{R}^{n})$.

(3) There exists a constant $C>0$ such that
\begin{equation} \label{eq1.1}
\sup _{Q} \frac{1}{|Q|^{\beta/n+1/s}} \|b(\cdot)-M_{Q}(b)(\cdot)\|_{(E_{r}^{s})_{t}(Q)}\leq C.\tag{1.1}
\end{equation}

(4) There exists a constant $C>0$ such that
\begin{equation} \label{eq1.2}
\sup _{Q} \frac{1}{|Q|^{1+\beta / n}} \int_{Q}\left|b(x)-M_{Q}(b)(x)\right| d x \leq C.\tag{1.2}
\end{equation}
\end{theorem}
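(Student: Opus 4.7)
The plan is to establish the equivalences via the implications $(1)\Rightarrow(2)$, $(1)\Rightarrow(3)$, $(1)\Rightarrow(4)$, $(3)\Rightarrow(4)$, $(4)\Rightarrow(1)$, and the harder $(2)\Rightarrow(1)$, which together close the cycle. For $(1)\Rightarrow(2)$, I would use the standard pointwise domination $|[b,M_\alpha]f(x)|\leq M_{\alpha,b}(|f|)(x)$ valid for $b\geq 0$, combined with $|b(x)-b(y)|\leq\|b\|_{\dot{\Lambda}_\beta}|x-y|^\beta\lesssim|Q|^{\beta/n}$ for $x,y\in Q$, which yields $M_{\alpha,b}(|f|)\lesssim\|b\|_{\dot{\Lambda}_\beta}M_{\alpha+\beta}(|f|)$. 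The hypothesis $(\alpha+\beta)/n=1/p-1/r=1/q-1/s$ then matches the boundedness of $M_{\alpha+\beta}:(E_p^q)_t(\mathbb{R}^n)\to(E_r^s)_t(\mathbb{R}^n)$ already known on slice spaces, in direct analogy with the $\alpha=0$ case of Theorem B. For $(1)\Rightarrow(3)$ and $(1)\Rightarrow(4)$, the pointwise estimate $0\leq M_Q(b)(x)-b(x)\leq\|b\|_{\dot{\Lambda}_\beta}|Q|^{\beta/n}$ on $Q$ combined with $\|\chi_Q\|_{(E_r^s)_t(\mathbb{R}^n)}\lesssim|Q|^{1/s}$ immediately yields (1.1) and (1.2).

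For $(3)\Rightarrow(4)$, I would invoke Hölder's inequality in slice spaces to write
\[
\int_Q|b-M_Q(b)|\,dx\leq\|b-M_Q(b)\|_{(E_r^s)_t(Q)}\,\|\chi_Q\|_{(E_{r'}^{s'})_t(\mathbb{R}^n)}\lesssim|Q|^{\beta/n+1/s}\cdot|Q|^{1-1/s}=|Q|^{1+\beta/n}.
\]
For $(4)\Rightarrow(1)$, first $b\geq 0$ is extracted from $M_Q(b)(x)=M_Q(|b|)(x)\geq|b(x)|$ at any Lebesgue point of $|b|$, whence $M_Q(b)(x)-b(x)\geq 2b^-(x)$ and (1.2) forces $\tfrac{1}{|Q|}\int_Q b^-\leq C|Q|^{\beta/n}\to 0$ as $|Q|\to 0$, so Lebesgue differentiation yields $b^-\equiv 0$ a.e. Given $b\geq 0$, since $M_Q(b)(x)\geq\max\{b(x),b_Q\}$ and $\int_Q|b-b_Q|=2\int_Q(b_Q-b)_+$,
\[
\int_Q|b-b_Q|\,dx\leq 2\int_Q\bigl(M_Q(b)(x)-b(x)\bigr)\,dx\leq 2C|Q|^{1+\beta/n},
\]
and the Meyers--Campanato characterization of $\dot{\Lambda}_\beta$ for $0<\beta<1$ delivers $b\in\dot{\Lambda}_\beta$.

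The hard step is $(2)\Rightarrow(1)$. Testing $[b,M_\alpha]$ on $\chi_Q$, note that $M_\alpha(\chi_Q)(x)=|Q|^{\alpha/n}$ for $x\in Q$, and the whole-cube lower bound $M_\alpha(b\chi_Q)(x)=M_\alpha(|b|\chi_Q)(x)\geq|Q|^{\alpha/n}(|b|)_Q$ gives
\[
[b,M_\alpha](\chi_Q)(x)\leq|Q|^{\alpha/n}\bigl(b(x)-(|b|)_Q\bigr),\qquad x\in Q,
\]
so $|[b,M_\alpha](\chi_Q)(x)|\geq|Q|^{\alpha/n}\bigl((|b|)_Q-b(x)\bigr)_+$. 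Integrating over $Q$, using $\int_Q((|b|)_Q-b)_+\geq\int_Q((|b|)_Q-b)=2\int_Q b^-$, and applying Hölder in slice spaces,
\[
2|Q|^{\alpha/n}\int_Q b^-\,dx\leq\|[b,M_\alpha]\chi_Q\|_{(E_r^s)_t(\mathbb{R}^n)}\,\|\chi_Q\|_{(E_{r'}^{s'})_t(\mathbb{R}^n)}\lesssim|Q|^{1/q}\cdot|Q|^{1-1/s}=|Q|^{1+(\alpha+\beta)/n},
\]
which forces $\tfrac{1}{|Q|}\int_Q b^-\lesssim|Q|^{\beta/n}\to 0$ and hence $b\geq 0$. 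Once $b\geq 0$, the same scheme with $(|b|)_Q=b_Q$ and $\int_Q(b_Q-b)_+=\tfrac{1}{2}\int_Q|b-b_Q|$ produces $\int_Q|b-b_Q|\lesssim|Q|^{1+\beta/n}$, and Meyers--Campanato again concludes $b\in\dot{\Lambda}_\beta$.

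The main obstacle is precisely this implication $(2)\Rightarrow(1)$: the factor $|Q|^{\alpha/n}$ produced by $M_\alpha(\chi_Q)$, absent in the $\alpha=0$ setting of Theorem B, must be tracked carefully through the slice-space scaling and Hölder's inequality. The argument nevertheless goes through because the one-sided bound $M_\alpha(b\chi_Q)(x)\geq|Q|^{\alpha/n}(|b|)_Q$, obtained merely from the whole-cube test, suffices once coupled with the $L^1$-identities $\int_Q((|b|)_Q-b)=2\int_Q b^-$ and $\int_Q(b_Q-b)_+=\tfrac{1}{2}\int_Q|b-b_Q|$, which convert the one-sided pointwise estimate into full control on the negative part and on the mean oscillation of $b$.
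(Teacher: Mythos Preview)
Your argument is correct, and in the crucial link between (2) and the other conditions it is genuinely different from---and cleaner than---the paper's.

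The paper runs the cycle $(1)\Rightarrow(2)\Rightarrow(3)\Rightarrow(4)\Rightarrow(1)$. Its step $(2)\Rightarrow(3)$ in the case $0<\alpha<n$ inserts the intermediate quantity $|Q|^{-\alpha/n}M_{\alpha,Q}(b)$, writing $b-M_Q(b)=I+II$ with $I=|Q|^{-\alpha/n}[b,M_\alpha](\chi_Q)$ on $Q$; the term $II=|Q|^{-\alpha/n}M_{\alpha,Q}(b)-M_Q(b)$ is then bounded pointwise by $|Q|^{-\alpha/n}\bigl|[|b|,M_\alpha](\chi_Q)\bigr|+\bigl|[|b|,M](\chi_Q)\bigr|$ and estimated via $\|b\|_{\dot\Lambda_\beta}$. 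As literally written this is circular, since $b\in\dot\Lambda_\beta$ is exactly what is being established. Your route sidesteps this entirely: you go $(2)\Rightarrow(1)$ directly, using only the trivial lower bound $M_\alpha(b\chi_Q)(x)\ge|Q|^{\alpha/n}(|b|)_Q$ from the whole-cube test, which yields the one-sided pointwise inequality $\bigl|[b,M_\alpha](\chi_Q)(x)\bigr|\ge|Q|^{\alpha/n}\bigl((|b|)_Q-b(x)\bigr)_+$ on $Q$. Combined with H\"older in slice spaces, Lemma~2.4, and the $L^1$ identities $\int_Q\bigl((|b|)_Q-b\bigr)=2\int_Q b^-$ and $\int_Q(b_Q-b)_+=\tfrac12\int_Q|b-b_Q|$, this delivers first $b^-=0$ and then $b\in\dot\Lambda_\beta$ via Lemma~2.1, with no appeal to any a~priori Lipschitz bound on $b$.

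Two minor remarks. First, your implications $(1)\Rightarrow(3)$ and $(1)\Rightarrow(4)$ are correct but redundant once you have $(1)\Rightarrow(2)\Rightarrow(1)$, $(3)\Rightarrow(4)\Rightarrow(1)$, and $(1)\Rightarrow(3)$; any one of the three direct arrows out of (1) would suffice to close the diagram. Second, in your $(4)\Rightarrow(1)$ you first prove $b\ge0$ and then use it to get $M_Q(b)\ge b_Q$; in fact $M_Q(b)(x)\ge\frac{1}{|Q|}\int_Q|b|\ge|b_Q|\ge b_Q$ holds for any locally integrable $b$, so the Lipschitz conclusion could be reached before the sign conclusion (as the paper does)---but your ordering is equally valid.
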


Here is the second result.
\begin{theorem}\label{TH1.2} Let $0<\beta<1$, $0 \leq \alpha<n$, $0 < \alpha+\beta<n$, $0<t<\infty$ and $b$ be a locally integrable function. If $1<p<r<\infty$, $1<q<s<\infty$ and $(\alpha+\beta)/n=1/p-1/r=1/q-1/s$, then the following statements are equivalent:

(1) $b \in \dot{\Lambda}_{\beta}\left(\mathbb{R}^{n}\right)$.

(2) $ M_{\alpha,b}$ is bounded from $(E_{p}^{q})_{t}(\mathbb{R}^{n})$ to $(E_{r}^{s})_{t}(\mathbb{R}^{n})$.

(3) There exists a constant $C>0$ such that

\begin{equation} \label{eq1.3}
\sup _{Q}\frac{1}{|Q|^{\beta/n+1/s}} \left\|b(\cdot)-b_{Q}\right\|_{(E_{r}^{s})_{t}(Q)} \leq C. \tag{1.3}
\end{equation}

(4) There exists a constant $C>0$ such that
\begin{equation} \label{eq1.4}
\sup _{Q} \frac{1}{|Q|^{1+\beta / n}} \int_{Q}\left|b(x)-b_{Q}\right| d x \leq C.\tag{1.4}
\end{equation}
\end{theorem}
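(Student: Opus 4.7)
My plan is to establish the cyclic chain (1) $\Rightarrow$ (2) $\Rightarrow$ (3) $\Rightarrow$ (4) $\Rightarrow$ (1), in parallel with Theorem~\ref{TH1.1} but replacing $M_Q(b)$ by $b_Q$. The symmetry of the kernel $|b(x)-b(y)|$ in the definition of $M_{\alpha,b}$ is what allows us to drop the sign condition $b\geq 0$ that appears in Theorem~\ref{TH1.1}.

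For (1) $\Rightarrow$ (2): The Lipschitz condition gives the pointwise domination
$$M_{\alpha,b}(f)(x)\leq C\|b\|_{\dot\Lambda_\beta}\,M_{\alpha+\beta}(f)(x),$$
since $|b(x)-b(y)|\leq \|b\|_{\dot\Lambda_\beta}|Q|^{\beta/n}$ whenever $x,y$ lie in a common cube $Q$. The conclusion then follows from the $(E_p^q)_t\to (E_r^s)_t$ boundedness of the fractional maximal operator $M_{\alpha+\beta}$ at the sharp scaling $(\alpha+\beta)/n=1/p-1/r=1/q-1/s$, available from the work of Auscher and Prisuelos-Arribas.

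For (2) $\Rightarrow$ (3): Fix a cube $Q$. For $x\in Q$, testing $M_{\alpha,b}$ against $\chi_Q$ yields
$$M_{\alpha,b}(\chi_Q)(x)\geq \frac{1}{|Q|^{1-\alpha/n}}\int_Q|b(x)-b(y)|\,dy\geq |Q|^{\alpha/n}|b(x)-b_Q|.$$
Taking the $(E_r^s)_t$-norm over $Q$, invoking the assumed boundedness, and using the routine estimate $\|\chi_Q\|_{(E_p^q)_t(\R^n)}\lesssim |Q|^{1/q}$ (which can be verified by direct computation in both regimes $\ell(Q)\gtrsim t$ and $\ell(Q)\ll t$), I obtain
$$\|b-b_Q\|_{(E_r^s)_t(Q)}\leq C|Q|^{-\alpha/n}\|\chi_Q\|_{(E_p^q)_t(\R^n)}\lesssim |Q|^{1/q-\alpha/n}=|Q|^{\beta/n+1/s},$$
using the scaling relation $1/q-1/s=(\alpha+\beta)/n$. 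For (3) $\Rightarrow$ (4), the target is an embedding of the form $\int_Q|g(y)|\,dy\leq C|Q|^{1/s'}\|g\|_{(E_r^s)_t(Q)}$ for $g$ supported in $Q$. This follows by applying Jensen's inequality to the inner $L^r$-average, H\"older on the outer $L^s$-integral, and Fubini to swap variables; specializing to $g=b-b_Q$ and combining with (3) yields (4). Finally, (4) $\Rightarrow$ (1) is the classical Campanato/DeVore--Sharpley characterization of $\dot\Lambda_\beta$ via mean oscillation over cubes.

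The main obstacles I anticipate are the careful bookkeeping needed for the norm of $\chi_Q$ in the slice space across the two regimes $\ell(Q)\gtrsim t$ and $\ell(Q)\ll t$ (so that the upper bound $|Q|^{1/q}$ holds uniformly in $Q$), and the cleanest formulation of the H\"older-type embedding used in (3) $\Rightarrow$ (4), since the slice-space norm interacts non-trivially with cubes whose side is smaller than $t$. The latter is the essentially only place where the specific geometry of $(E_r^s)_t$ enters, so a robust lemma there will streamline both this theorem and Theorem~\ref{TH1.1}.
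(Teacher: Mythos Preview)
Your proposal is correct and follows essentially the same cyclic route $(1)\Rightarrow(2)\Rightarrow(3)\Rightarrow(4)\Rightarrow(1)$ as the paper, with the same pointwise domination $M_{\alpha,b}(f)\lesssim\|b\|_{\dot\Lambda_\beta}M_{\alpha+\beta}(f)$, the same test by $\chi_Q$, and the same Campanato characterization at the end. The only cosmetic difference is that the paper packages your two ``obstacles'' into ready-made lemmas: the estimate $\|\chi_Q\|_{(E_p^q)_t}\sim|Q|^{1/q}$ is quoted as Lemma~\ref{le2.4} (from \cite{LZW}), and the step $(3)\Rightarrow(4)$ is done via the H\"older inequality for slice spaces together with $\|\chi_Q\|_{(E_{r'}^{s'})_t}\sim|Q|^{1/s'}$, rather than your direct Jensen/Fubini argument---so no separate regime analysis in $\ell(Q)$ versus $t$ is needed.
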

Finally, we obtain the following result.

\begin{theorem}\label{TH1.3} Let $0<\beta<1$, $0<t<\infty$ and $b$ be a locally integrable function. If $1<p<r<\infty$, $1<q<s<\infty$ and $\beta/n=1/p-1/r=1/q-1/s$, then the following statements are equivalent:

(1) $b \in \dot{\Lambda}_{\beta}\left(\mathbb{R}^{n}\right)$ and $b \geq 0$.

(2) $\left[b, M^{\sharp}\right]$ is bounded from $(E_{p}^{q})_{t}(\mathbb{R}^{n})$ to $(E_{r}^{s})_{t}(\mathbb{R}^{n})$.

(3) There exists a constant $C>0$ such that
\begin{equation} \label{eq1.5}
\sup _{Q}\frac{1}{|Q|^{\beta/n+1/s}}\left\|b(\cdot)-2 M^{\sharp}\left(b \chi_{Q}\right)(\cdot)\right\|_{(E_{r}^{s})_{t}(Q)} \leq C.\tag{1.5}
\end{equation}

(4) There exists a constant $C>0$ such that
\begin{equation} \label{eq1.6}
\sup _{Q} \frac{1}{|Q|^{1+\beta / n}} \int_{Q}\left|b(x)-2 M^{\sharp}\left(b \chi_{Q}\right)(x)\right| d x \leq C .\tag{1.6}
\end{equation}
\end{theorem}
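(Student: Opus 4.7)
The plan is to close the cycle $(1)\Rightarrow(2)\Rightarrow(3)\Rightarrow(4)\Rightarrow(1)$, paralleling the schemes for Theorems~\ref{TH1.1} and~\ref{TH1.2} but exploiting an algebraic identity peculiar to the sharp maximal operator.

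For $(1)\Rightarrow(2)$, I would first derive the pointwise estimate
\[
\bigl|[b,M^\sharp](f)(x)\bigr| \;\lesssim\; \|b\|_{\dot\Lambda_\beta}\,M_\beta(|f|)(x).
\]
On each cube $Q_0\ni x$ I expand $|Q_0|^{-1}\int_{Q_0}|bf-(bf)_{Q_0}|\,dy$ around $b(x)|Q_0|^{-1}\int_{Q_0}|f-f_{Q_0}|\,dy$ (using $b\ge 0$), and control the cross terms by $\|b\|_{\dot\Lambda_\beta}\ell(Q_0)^\beta\,|Q_0|^{-1}\int_{Q_0}|f|\,dy\le \|b\|_{\dot\Lambda_\beta}M_\beta(|f|)(x)$. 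Then (2) follows from the boundedness of $M_\beta$ from $(E_p^q)_t$ to $(E_r^s)_t$ under the scaling $\beta/n=1/p-1/r=1/q-1/s$, which is the Auscher--Prisuelos-Arribas result.

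For $(2)\Rightarrow(3)$, the crucial identity is $M^\sharp(\chi_Q)(x)=\tfrac12$ for all $x\in Q$: the computation $|Q_0|^{-1}\int_{Q_0}|\chi_Q-(\chi_Q)_{Q_0}|\,dy=2\lambda(1-\lambda)$ with $\lambda:=|Q_0\cap Q|/|Q_0|$ has maximum $\tfrac12$, and a continuity argument produces, for any $x\in Q$, a cube $Q_0\ni x$ realizing $\lambda=\tfrac12$. Consequently, for $x\in Q$,
\[
b(x)-2M^\sharp(b\chi_Q)(x) \;=\; 2\,[b,M^\sharp](\chi_Q)(x),
\]
so testing (2) with $f=\chi_Q$, together with $\|\chi_Q\|_{(E_p^q)_t}\lesssim |Q|^{1/q}$ and $1/q=\beta/n+1/s$, gives (3). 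Step $(3)\Rightarrow(4)$ is a H\"older/duality estimate: applying $\int_Q|g|\,dx\le \|g\|_{(E_r^s)_t(Q)}\|\chi_Q\|_{(E_{r'}^{s'})_t}\lesssim |Q|^{1/s'}\|g\|_{(E_r^s)_t(Q)}$ to $g=b-2M^\sharp(b\chi_Q)$ converts (3) to (4).

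The main obstacle is $(4)\Rightarrow(1)$, which I would split in two. For non-negativity, I argue by contradiction: at a Lebesgue point $x_0$ of $b$ with $b(x_0)<0$, on sufficiently small $Q\ni x_0$ the set $E:=\{y\in Q:b(y)\le b(x_0)/2\}$ satisfies $|E|\ge\tfrac{4}{5}|Q|$ by Chebyshev; since $M^\sharp(b\chi_Q)\ge 0$, on $E$ one has $|b(y)-2M^\sharp(b\chi_Q)(y)|\ge|b(y)|\ge|b(x_0)|/2$, and (4) forces $|b(x_0)|\lesssim|Q|^{\beta/n}\to 0$, a contradiction. Once $b\ge 0$ is known, I convert (4) into the classical oscillation characterization $b\in\dot\Lambda_\beta\Leftrightarrow\sup_Q|Q|^{-1-\beta/n}\int_Q|b-b_Q|\,dx<\infty$: the choice $Q_0=Q$ in the sharp-maximal supremum gives $M^\sharp(b\chi_Q)(x)\ge|Q|^{-1}\int_Q|b-b_Q|\,dy$ for $x\in Q$, and a triangle-inequality/averaging argument transfers the $L^1$ bound from $b-2M^\sharp(b\chi_Q)$ to $b-b_Q$. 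The delicate point is that $2M^\sharp(b\chi_Q)(\cdot)$ is not constant on $Q$ and not a priori close to $b_Q$, so the comparison leaves an oscillation term that must be absorbed against the mean $(2M^\sharp(b\chi_Q))_Q$; this is where essentially all the technical work concentrates.
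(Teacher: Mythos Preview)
Your cycle $(1)\Rightarrow(2)\Rightarrow(3)\Rightarrow(4)$ matches the paper's argument essentially step for step: the pointwise bound $|[b,M^\sharp]f|\lesssim\|b\|_{\dot\Lambda_\beta}M_\beta f$, the identity $M^\sharp(\chi_Q)\equiv\tfrac12$ on $Q$, testing on $\chi_Q$, and then H\"older against $\|\chi_Q\|_{(E_{r'}^{s'})_t}\sim|Q|^{1/s'}$. Your contradiction argument for $b\ge 0$ in $(4)\Rightarrow(1)$ is also correct and is a legitimate alternative to the paper's route.

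The genuine gap is in your deduction of $b\in\dot\Lambda_\beta$ from (4). The ``triangle-inequality/averaging'' scheme you describe is circular: writing $A:=|Q|^{-1}\!\int_Q|b-b_Q|$ and $m:=2M^\sharp(b\chi_Q)$, the splitting $|b-b_Q|\le|b-m|+|m-b_Q|$ leaves you with $|Q|^{-1}\!\int_Q|m-b_Q|$, and any further splitting through $\bar m:=|Q|^{-1}\!\int_Q m$ feeds $A$ back into the estimate. Your lower bound $M^\sharp(b\chi_Q)(x)\ge A$ (coming from $Q_0=Q$) is the wrong input here; it tells you $m$ is large, not that $|m-b_Q|$ is small.

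The missing ingredient is a different pointwise lower bound due to Bastero--Milman--Ruiz: for every $x\in Q$ one has $|b_Q|\le 2M^\sharp(b\chi_Q)(x)$. The paper invokes this twice. First, combined with the $E/F$ splitting (Lemma~\ref{le2.5}) it yields directly
\[
\frac{1}{|Q|}\int_Q|b-b_Q|\,dx\ \le\ \frac{2}{|Q|}\int_Q\bigl|b-2M^\sharp(b\chi_Q)\bigr|\,dx,
\]
so (4) gives $b\in\dot\Lambda_\beta$ immediately, \emph{without} first knowing $b\ge 0$. Second, the same inequality $|b_Q|\le 2M^\sharp(b\chi_Q)(x)$ gives $2M^\sharp(b\chi_Q)(x)-b(x)\ge|b_Q|-b^+(x)+b^-(x)$; averaging over $Q$ and letting $|Q|\to 0$ at a Lebesgue point yields $2b^-(x)\le 0$. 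If you insist on your order (first $b\ge 0$, then Lipschitz), the BMR bound still closes your argument cleanly: once $b\ge 0$ you have $m(x)\ge b_Q$ pointwise on $Q$, whence $|Q|^{-1}\!\int_Q|m-b_Q|=\bar m-b_Q\le|Q|^{-1}\!\int_Q|m-b|\le C|Q|^{\beta/n}$, and the triangle inequality then gives $A\le 2C|Q|^{\beta/n}$. Either way, the key is $|b_Q|\le 2M^\sharp(b\chi_Q)$, not the oscillation lower bound you proposed.
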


\section{Preliminaries}
To prove our results, we give some necessary  lemmas in this section. It is well-known that the Lipschitz space $\dot{\Lambda}_\beta\left(\mathbb{R}^n\right)$ coincides with some Morrey-Companato spaces (see \cite{JTW} for example) and can be characterized by mean oscillation as the following lemma, which is due to DeVore and Sharpley \cite{DS} and Paluszy\'{n}ski \cite{P}.
\begin{lemma} \label{le2.1}
Let $0<\beta<1$ and $1 \leq q<\infty$. The space $\dot{\Lambda}_{\beta, q}\left(\mathbb{R}^{n}\right)$ is defined as the set of all locally integrable functions $f$ such that
$$
\begin{aligned}
\|f\|_{\dot{\Lambda}_{\beta, q}}=\sup _{Q} \frac{1}{|Q|^{\beta / n}}\left(\frac{1}{|Q|} \int_{Q}\left|f(x)-f_{Q}\right|^{q}dx\right)^{1 / q}<\infty.
\end{aligned}
$$
Then, for all $0<\beta<1$ and $1 \leq q<\infty, \dot{\Lambda}_{\beta}\left(\mathbb{R}^{n}\right)=\dot{\Lambda}_{\beta, q}\left(\mathbb{R}^{n}\right)$ with equivalent norms.
\end{lemma}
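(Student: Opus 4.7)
The plan is to establish the two inclusions $\dot{\Lambda}_\beta(\mathbb{R}^n) \hookrightarrow \dot{\Lambda}_{\beta,q}(\mathbb{R}^n)$ and $\dot{\Lambda}_{\beta,q}(\mathbb{R}^n) \hookrightarrow \dot{\Lambda}_\beta(\mathbb{R}^n)$ with comparable norms. The first inclusion is soft; the second requires a telescoping Campanato-type argument.

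For the easy direction, assume $f\in \dot{\Lambda}_\beta$. For any cube $Q$ and $x\in Q$, I would write
$$|f(x)-f_Q|\le \frac{1}{|Q|}\int_Q |f(x)-f(y)|\,dy \le \|f\|_{\dot\Lambda_\beta}\,\mathrm{diam}(Q)^\beta \lesssim \|f\|_{\dot\Lambda_\beta}\,|Q|^{\beta/n}.$$
Taking the $L^q$-mean over $Q$ then gives $\|f\|_{\dot\Lambda_{\beta,q}}\lesssim \|f\|_{\dot\Lambda_\beta}$ for every $1\le q<\infty$.

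For the harder direction, assume $f\in \dot\Lambda_{\beta,q}$. The core observation is a comparison sublemma: if $Q\subset Q'$ are cubes with $|Q'|\le 2^n|Q|$, then by H\"older's inequality
$$|f_Q-f_{Q'}|\le \frac{1}{|Q|}\int_Q |f(x)-f_{Q'}|\,dx \le \Big(\frac{|Q'|}{|Q|}\Big)^{1/q}\Big(\frac{1}{|Q'|}\int_{Q'}|f-f_{Q'}|^q\Big)^{1/q}\lesssim \|f\|_{\dot\Lambda_{\beta,q}}\,|Q'|^{\beta/n}.$$
Given a cube $Q_0$ centered at $x$, I would build the nested sequence $Q_0\supset Q_1\supset Q_2\supset\cdots$ with $Q_k$ centered at $x$ and $\ell(Q_k)=\ell(Q_0)/2^k$, apply the sublemma to each consecutive pair, and use the geometric series $\sum_k 2^{-k\beta}<\infty$ (since $\beta>0$) to telescope:
$$|f_{Q_0}-f_{Q_k}|\le \sum_{j=0}^{k-1}|f_{Q_{j+1}}-f_{Q_j}|\lesssim \|f\|_{\dot\Lambda_{\beta,q}}\,\ell(Q_0)^\beta.$$
Letting $k\to\infty$ and invoking the Lebesgue differentiation theorem (at a.e.\ $x$, $f_{Q_k}\to f(x)$) yields the pointwise bound $|f(x)-f_{Q_0}|\lesssim \|f\|_{\dot\Lambda_{\beta,q}}\,\ell(Q_0)^\beta$ for a.e.\ $x\in Q_0$.

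Finally, for a.e.\ pair $x,y\in\mathbb{R}^n$, I would take $Q$ to be an axis-parallel cube containing both with $\ell(Q)\sim |x-y|$, apply the previous estimate at each of $x,y$, and use the triangle inequality:
$$|f(x)-f(y)|\le |f(x)-f_Q|+|f(y)-f_Q|\lesssim \|f\|_{\dot\Lambda_{\beta,q}}\,|x-y|^\beta,$$
which after redefinition on a null set gives a $\beta$-H\"older representative and the estimate $\|f\|_{\dot\Lambda_\beta}\lesssim \|f\|_{\dot\Lambda_{\beta,q}}$. The main obstacle is making the telescoping rigorous: the sublemma requires the nested cubes to have bounded size ratio, the series only converges because $0<\beta<1$, and the conclusion $f_{Q_k}\to f(x)$ holds only a.e., so one must work with the continuous representative obtained after modification on a null set.
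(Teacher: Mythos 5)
The paper does not actually prove this lemma: it is quoted as a known equivalence and attributed to DeVore--Sharpley \cite{DS} and Paluszy\'{n}ski \cite{P}, so there is no in-paper argument to compare against. Your proof is the standard Campanato-type argument and is essentially correct: the inclusion $\dot{\Lambda}_{\beta}\subset\dot{\Lambda}_{\beta,q}$ is the soft pointwise bound you give, and the converse is the comparison-of-averages sublemma plus telescoping plus Lebesgue differentiation. The one point to repair is an internal inconsistency in the telescoping step: you fix $Q_0$ \emph{centered} at $x$ and shrink concentrically, yet you then assert the resulting bound $|f(x)-f_{Q_0}|\lesssim\|f\|_{\dot{\Lambda}_{\beta,q}}\,\ell(Q_0)^{\beta}$ for a.e.\ $x\in Q_0$, and in the last step you need it for a single cube $Q$ containing two distinct points $x$ and $y$, which cannot be centered at both. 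The standard fix is to run the telescoping for an arbitrary point $x\in Q_0$ by letting $Q_{k+1}$ be whichever of the $2^{n}$ congruent subcubes of $Q_{k}$ contains $x$; the volume ratio is still $2^{n}$, so your sublemma applies, the cubes shrink regularly to $x$ so $f_{Q_k}\to f(x)$ at every Lebesgue point, and the geometric series $\sum_k 2^{-k\beta}$ (which needs only $\beta>0$, not $\beta<1$) closes the estimate. With that adjustment the argument is complete and matches the classical proof in the cited references.
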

\begin{lemma}\cite{ZW}\label{le2.2}
Let  $0 \leq \alpha<n$, $Q$  be a cube in  $\mathbb{R}^{n}$  and  $f$  be locally integrable. Then
$$M_{\alpha}\left(f \chi_{Q}\right)(x)=M_{\alpha, Q}(f)(x), \text { for all } x \in Q \text {. }$$
\end{lemma}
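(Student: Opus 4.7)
The plan is to prove the two inequalities $M_{\alpha}(f\chi_{Q})(x)\leq M_{\alpha,Q}(f)(x)$ and $M_{\alpha,Q}(f)(x)\leq M_{\alpha}(f\chi_{Q})(x)$ separately for each fixed $x\in Q$. The starting point is to rewrite
$$M_{\alpha}(f\chi_{Q})(x)=\sup_{Q_{0}\ni x}\frac{1}{|Q_{0}|^{1-\alpha/n}}\int_{Q_{0}\cap Q}|f(y)|\,dy,$$
where the supremum runs over \emph{all} axis-parallel cubes containing $x$, whereas the definition of $M_{\alpha,Q}(f)(x)$ restricts to cubes $Q_{0}\subseteq Q$ containing $x$.

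The easy direction is $M_{\alpha,Q}(f)(x)\leq M_{\alpha}(f\chi_{Q})(x)$: if $Q_{0}\subseteq Q$ contains $x$, then $Q_{0}\cap Q=Q_{0}$, and the corresponding average is exactly one of the averages appearing in the definition of $M_{\alpha}(f\chi_{Q})(x)$, so the inequality follows by taking the supremum over $Q_{0}\subseteq Q$ with $x\in Q_{0}$.

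The nontrivial direction is to show that for every cube $Q_{0}\ni x$ (not necessarily contained in $Q$) there is a cube $Q_{0}^{\prime}\subseteq Q$ with $x\in Q_{0}^{\prime}$ and
$$\frac{1}{|Q_{0}|^{1-\alpha/n}}\int_{Q_{0}\cap Q}|f(y)|\,dy\leq\frac{1}{|Q_{0}^{\prime}|^{1-\alpha/n}}\int_{Q_{0}^{\prime}}|f(y)|\,dy.$$
I would split into two cases according to the sidelengths $\ell(Q_{0})$ and $\ell(Q)$. When $\ell(Q_{0})>\ell(Q)$, one simply takes $Q_{0}^{\prime}=Q$: then $|Q_{0}^{\prime}|\leq|Q_{0}|$ makes the denominator work in our favour (since $1-\alpha/n>0$), and $Q_{0}\cap Q\subseteq Q=Q_{0}^{\prime}$ handles the numerator. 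When $\ell(Q_{0})\leq\ell(Q)$, I would translate $Q_{0}$ coordinate-by-coordinate: in each coordinate $i$, if $Q_{0}$'s interval $[a_{i},b_{i}]$ sticks out past $[c_{i},d_{i}]$, slide it inward until it fits inside $[c_{i},d_{i}]$, obtaining a new cube $Q_{0}^{\prime}\subseteq Q$ of the same sidelength as $Q_{0}$.

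The one point that needs a short verification is that the translated cube $Q_{0}^{\prime}$ still contains $x$ and still contains $Q_{0}\cap Q$. Containment of $x$ follows because $x_{i}\in[a_{i},b_{i}]\cap[c_{i},d_{i}]$ and the translated interval always includes the overlap endpoints closest to $x_{i}$. Containment of $Q_{0}\cap Q$ uses the inequality $\ell(Q_{0})\leq\ell(Q)$ to show, for example in the case $b_{i}>d_{i}$, that the shifted interval $[d_{i}-\ell(Q_{0}),d_{i}]$ still contains $[\max(a_{i},c_{i}),d_{i}]$ (because $a_{i}=b_{i}-\ell(Q_{0})>d_{i}-\ell(Q_{0})$). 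Once $Q_{0}^{\prime}$ is constructed with $|Q_{0}^{\prime}|=|Q_{0}|$ and $Q_{0}\cap Q\subseteq Q_{0}^{\prime}\subseteq Q$, the required pointwise inequality is immediate, and taking suprema over $Q_{0}\ni x$ gives $M_{\alpha}(f\chi_{Q})(x)\leq M_{\alpha,Q}(f)(x)$. The main obstacle — really the only subtle point — is carrying out the coordinate-wise sliding argument cleanly and verifying the two containments; everything else is a direct application of the definitions.
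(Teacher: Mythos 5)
Your proof is correct. Note that the paper does not prove Lemma \ref{le2.2} at all --- it is quoted from the reference [ZW] without argument --- so there is no in-paper proof to compare against; your write-up supplies the standard justification. Both directions check out: the inclusion of the restricted supremum into the unrestricted one is immediate, and your reduction of an arbitrary cube $Q_{0}\ni x$ to a competitor inside $Q$ is sound. In the case $\ell(Q_{0})>\ell(Q)$ the choice $Q_{0}^{\prime}=Q$ works precisely because $1-\alpha/n>0$ (guaranteed by $0\leq\alpha<n$), and in the case $\ell(Q_{0})\leq\ell(Q)$ the coordinate-wise sliding is well defined since in each coordinate the interval of $Q_{0}$ can overflow that of $Q$ on at most one side; your verifications that the translated cube still contains $x$ and still contains $Q_{0}\cap Q$ are exactly the two facts needed, and they hold as you argue. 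The only cosmetic remark is that one should fix a convention (say, closed axis-parallel cubes) so that the translated cube is again an admissible competitor, but this is harmless.
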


The following lemma is given by Lu, Wang and Zhou\cite{LWZ}, they obtained that the boundedness of the fractional maximal operator $M_{\alpha}$ on slice spaces.
\begin{lemma}\label{le2.3} Let $0<t<\infty$, $1<p<r<\infty$ and $1<q<s<\infty$ with $\alpha/n=1/p-1/r=1/q-1/s$ for $0<\alpha<n$. If  $f\in(E_{p}^{q})_{t}(\mathbb{R}^{n})$, then
$$\|M_{\alpha} f\|_{(E_{r}^{s})_{t}(\mathbb{R}^{n})} \leq C\|f\|_{(E_{p}^{q})_{t}(\mathbb{R}^{n})},$$
where the positive constant $C$ is independent of $f$ and $t$.
\end{lemma}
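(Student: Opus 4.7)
The plan is to reduce the slice-space bound to the classical Muckenhoupt--Wheeden inequality $M_{\alpha}\colon L^{q}\to L^{s}$ (valid because $1/s=1/q-\alpha/n$) via a pointwise two-scale decomposition of $M_{\alpha}f$ that separates small cubes ($\ell(Q)\le t$) from large ones ($\ell(Q)>t$). Put
$$
F(x):=\Bigl(\frac{1}{|Q(x,t)|}\int_{Q(x,t)}|f(y)|^{p}\,dy\Bigr)^{1/p},
$$
so that $\|f\|_{(E_p^q)_t(\mathbb{R}^n)}=\|F\|_{L^{q}(\mathbb{R}^n)}$, and similarly denote the inner slice-$L^{r}$ average of $M_{\alpha}f$ by $G(x)$, so that $\|M_{\alpha}f\|_{(E_r^s)_t(\mathbb{R}^n)}=\|G\|_{L^{s}(\mathbb{R}^n)}$.

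For a competing cube $Q\ni y$ with $\ell(Q)>t$, I would cover a slight enlargement $Q^{*}$ of $Q$ (with $|Q^{*}|\sim |Q|$) by cubes $\{Q_k\}$ of side $t$ from the standard tiling. Hölder on each $Q_k$ gives $\int_{Q_k}|f|\lesssim |Q_k|^{1/p'}(\int_{Q_k}|f|^{p})^{1/p}\lesssim t^{n}F(c_k)$, and the resulting Riemann sum is $\sim \int_{Q^{*}}F$. This yields the pointwise bound $|Q|^{\alpha/n-1}\int_{Q}|f|\lesssim M_{\alpha}F(y)$, i.e.\ $M_{\alpha}^{\mathrm{glob}}f(y)\lesssim M_{\alpha}F(y)$. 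For a cube $Q\ni y$ with $\ell(Q)\le t$, the inclusion $Q\subset Q(y,2t)\subset Q(x,3t)$ (valid for $y\in Q(x,t)$) lets one replace the competing cubes by cubes inside the single cube $Q(x,3t)$, giving $M_{\alpha}^{\mathrm{loc}}f(y)\le M_{\alpha}(f\chi_{Q(x,3t)})(y)$.

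Taking the $L^{r}$-average on $Q(x,t)$ and then the $L^{s}$-norm in $x$, the global contribution is dominated by $\|M_{\alpha}F\|_{(E_{r}^{s})_{t}}\le \|M_{\alpha}F\|_{L^{s}}\lesssim \|F\|_{L^{q}}$; the first inequality uses $(\frac{1}{|Q(x,t)|}\int_{Q(x,t)}|g|^{r})^{1/r}\le (M(|g|^{r})(x))^{1/r}$ together with the $L^{s/r}$-boundedness of $M$, and the second is Muckenhoupt--Wheeden. For the local contribution, Minkowski and the classical $L^{p}\to L^{r}$ boundedness of $M_{\alpha}$ give
$$
\Bigl(\frac{1}{|Q(x,t)|}\int_{Q(x,t)}|M_{\alpha}(f\chi_{Q(x,3t)})|^{r}\Bigr)^{1/r}\lesssim t^{-n/r}\|f\chi_{Q(x,3t)}\|_{L^{p}}\lesssim t^{\alpha}F_{3}(x),
$$
where $F_{3}$ is the slice-$L^{p}$ average on the scale-$3t$ cube. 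Its $L^{s}$-norm is controlled by $\|F\|_{L^{q}}$ through a Fefferman--Stein-type computation---$F_{3}^{p}\lesssim M(F^{p})$ at the scale $t$, followed by $L^{q/p}$-boundedness of $M$---that absorbs exactly the $t^{\alpha}$ factor via the scaling identity $1/q-1/s=\alpha/n$.

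\textbf{Main obstacle.} The technically delicate step is the control of the local term. Bounding it requires that one trade the $t^{\alpha}$-prefactor against the $q\to s$ exponent shift, which is only possible because the scaling identities $1/p-1/r=1/q-1/s=\alpha/n$ make the inner and outer shifts match. Concretely, this amounts to proving a Fefferman--Stein-style inequality of the form $\|F_{3}\|_{L^{s}}\lesssim t^{-\alpha}\|F\|_{L^{q}}$; the proof passes through the $L^{q/p}$-boundedness of the Hardy--Littlewood maximal operator applied to $|f|^{p}$ and a careful change of scale, and tracking all constants so that the final one is independent of $t$ is the main bookkeeping challenge.
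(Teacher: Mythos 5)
The paper offers no proof of this lemma at all: it is imported verbatim from the reference [LWZ], so your argument has to be judged on its own. The architecture you choose (split $M_{\alpha}$ at scale $t$, control the large-cube part by $M_{\alpha}$ acting on the slice average $F$, and the small-cube part by the classical $L^{p}\to L^{r}$ bound applied to $f\chi_{Q(x,3t)}$) is the standard and workable one, but two of your steps fail under the stated hypotheses. In the global term you pass from $\|M_{\alpha}F\|_{(E_{r}^{s})_{t}}$ to $\|M_{\alpha}F\|_{L^{s}}$ via $\bigl(\frac{1}{|Q(x,t)|}\int_{Q(x,t)}|g|^{r}\bigr)^{1/r}\le \bigl(M(|g|^{r})(x)\bigr)^{1/r}$ and the $L^{s/r}$-boundedness of $M$; this requires $s>r$, which is equivalent to $q>p$ and is \emph{not} implied by the hypotheses (take $n=1$, $\alpha=1/4$, $p=2$, $r=4$, $q=4/3$, $s=2$: then $s<r$, $M$ is unbounded on $L^{s/r}$, and in fact $L^{s}\not\hookrightarrow (E_{r}^{s})_{t}$). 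The repair is to use instead that $M_{\alpha}^{\mathrm{glob}}f$ is essentially constant on each $Q(x,t)$ (a competing cube of side $>t$ through one point of $Q(x,t)$ dilates to one through any other point with comparable measure), so its inner $L^{r}$-average is already $\lesssim M_{\alpha}F_{2}(x)$ pointwise before any maximal function on $|g|^{r}$ is invoked.

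The second and more serious gap is exactly the step you flag as the main obstacle: $\|F_{3}\|_{L^{s}}\lesssim t^{-\alpha}\|F\|_{L^{q}}$ cannot be extracted from ``$F_{3}^{p}\lesssim M(F^{p})$ plus $L^{q/p}$-boundedness of $M$''. That route needs $q>p$ (again not guaranteed), and even when available it only yields the scale-comparability $\|F_{3}\|_{L^{q}}\lesssim\|F\|_{L^{q}}$: a bounded operator on $L^{q/p}$ cannot produce the integrability gain from $q$ to $s$, nor the factor $t^{-\alpha}$. What is actually needed is the endpoint estimate $\|F_{3}\|_{L^{\infty}}\lesssim t^{-n/q}\|F\|_{L^{q}}$, which follows because $F_{3}$ is slowly varying: for $x'\in Q(x,t)$ one has $Q(x,3t)\subset Q(x',5t)$, hence $F_{3}(x)\lesssim F_{5}(x')$ and so $F_{3}(x)^{q}\lesssim t^{-n}\int_{Q(x,t)}F_{5}(x')^{q}\,dx'\lesssim t^{-n}\|F\|_{L^{q}}^{q}$. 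Then $\|F_{3}\|_{L^{s}}\le\|F_{3}\|_{L^{\infty}}^{1-q/s}\|F_{3}\|_{L^{q}}^{q/s}\lesssim t^{-n(1/q-1/s)}\|F\|_{L^{q}}=t^{-\alpha}\|F\|_{L^{q}}$, which is the identity you wanted. With these two repairs (and the routine replacement of $F(c_{k})$ by the scale-$2t$ average in your Riemann-sum step) the proof closes; as written, both halves have a genuine hole.
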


\begin{lemma}\cite{LZW}\label{le2.4} Let $0<t<\infty$, $1<p,r<\infty$ and $Q$ be a cube in $\mathbb{R}^{n}$. Then
$$
\|\chi_{Q}\|_{(E_r^p)_t(\mathbb{R}^n)} \sim |Q|^{1 / p},
$$
\end{lemma}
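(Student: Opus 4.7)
\textbf{Proof plan for Lemma~\ref{le2.4}.} The plan is to evaluate the slice norm of $\chi_Q$ directly and bound it above and below by a constant multiple of $|Q|^{1/p}$. Unwinding the definition, since $|\chi_Q(y)|^{r}=\chi_Q(y)$,
\[
\|\chi_Q\|_{(E_r^p)_t(\R^n)}^{p}
=\int_{\R^n}\left(\frac{|Q\cap Q(x,t)|}{|Q(x,t)|}\right)^{p/r}dx,
\]
so writing $\Phi(x)=|Q\cap Q(x,t)|/|Q(x,t)|\in[0,1]$, the entire problem reduces to bounding $\int_{\R^n}\Phi(x)^{p/r}\,dx$ from above and below by a multiple of $|Q|$. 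Note that $\Phi$ is supported in the cube $Q^{\ast}$ concentric with $Q$ and of side length $\ell(Q)+t$.

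For the upper bound I would split on the sign of $p/r-1$. When $p\ge r$, the elementary inequality $\Phi^{p/r}\le \Phi$ on $[0,1]$ combined with Fubini gives
\[
\int_{\R^n}\Phi(x)\,dx
=\frac{1}{|Q(0,t)|}\int_{Q}\int_{\R^n}\chi_{Q(y,t)}(x)\,dx\,dy=|Q|,
\]
yielding $\|\chi_Q\|_{(E_r^p)_t}^{p}\le|Q|$. When $p<r$ I would instead use $\Phi(x)^{p/r}\le 1$ on the support of $\Phi$ together with the obvious bound $|Q^{\ast}|\le(\ell(Q)+t)^{n}\lesssim|Q|$ (the implicit constant being allowed to depend on $t$).

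For the lower bound I would restrict the integration to the concentric subcube $\widetilde Q\subseteq Q$ consisting of those centres $x$ for which $Q(x,t)\subseteq Q$; on $\widetilde Q$ one has $\Phi(x)\equiv 1$ and $|\widetilde Q|\gtrsim|Q|$, so
\[
\|\chi_Q\|_{(E_r^p)_t}^{p}\ge\int_{\widetilde Q}1\,dx=|\widetilde Q|\gtrsim |Q|.
\]
Taking $p$-th roots in both directions delivers $\|\chi_Q\|_{(E_r^p)_t(\R^n)}\sim|Q|^{1/p}$.

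The main obstacle is the interplay between the side length $\ell(Q)$ and the scale $t$: the clean estimates above are cleanest in the regime $\ell(Q)\gtrsim t$, where $|Q^{\ast}|\sim|\widetilde Q|\sim|Q|$; in the opposite regime $\ell(Q)\ll t$ one must absorb powers of $t$ into the implicit constants of $\sim$, which is permissible because $t$ is a fixed parameter of the slice space but forces a careful tracking of how the two case bounds $p\ge r$ and $p<r$ are reconciled.
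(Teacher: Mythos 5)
Your reduction of the norm to $\int_{\R^n}\Phi(x)^{p/r}\,dx$ with $\Phi(x)=|Q\cap Q(x,t)|/|Q(x,t)|$ is correct, and the Fubini identity $\int_{\R^n}\Phi(x)\,dx=|Q|$ is an exact and clean computation; note that the paper itself offers no argument for this lemma (it is quoted from \cite{LZW}), so there is no in-paper proof to match against. However, your treatment of the regime $\ell(Q)<t$ contains a genuine gap, and the closing claim that the mismatch can be ``absorbed into the implicit constants because $t$ is a fixed parameter'' is not correct: the constant you would need depends on the ratio $t/\ell(Q)$, which is unbounded as $Q$ shrinks, not merely on $t$. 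Concretely, your lower bound uses the set $\widetilde Q=\{x: Q(x,t)\subseteq Q\}$, which is empty once $\ell(Q)<t$, so $|\widetilde Q|\gtrsim|Q|$ fails outright; and your upper bound in the case $p<r$ rests on $(\ell(Q)+t)^n\lesssim|Q|$, which fails as $\ell(Q)\to0$ because the left-hand side tends to $t^n>0$ while $|Q|\to0$.

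Moreover, the gap cannot be closed by bookkeeping, because the asymptotics genuinely change at scale $t$. If $\ell(Q)\le t/2$, then $\Phi(x)\le|Q|/|Q(x,t)|=|Q|t^{-n}$ everywhere, $\Phi(x)=|Q|t^{-n}$ on the set $\{x:Q\subseteq Q(x,t)\}$, which has measure $\sim t^n$, and $\supp\Phi$ has measure $\lesssim t^n$; hence
\[
\int_{\R^n}\Phi(x)^{p/r}\,dx\;\sim\;t^{n}\bigl(|Q|t^{-n}\bigr)^{p/r},
\qquad\text{i.e.}\qquad
\|\chi_Q\|_{(E_r^p)_t(\R^n)}\;\sim\;t^{\,n/p-n/r}\,|Q|^{1/r}.
\]
Since $|Q|^{1/r}$ is not uniformly comparable to $|Q|^{1/p}$ over small cubes unless $p=r$, your argument establishes the stated equivalence only for cubes with $\ell(Q)\gtrsim t$. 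To prove the lemma as written you would have to either restrict to that class of cubes or make the dependence on $\ell(Q)/t$ explicit; as it stands, the two-sided bound with constants independent of $Q$ does not follow from (and is in fact contradicted by) the computation above in the small-cube regime.
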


\begin{lemma}\cite{BMR}\label{le2.5}
For any fixed cube $Q$, let $E=\left\{x \in Q: b(x) \leq b_Q\right\}$ and $F=\left\{x \in Q: b(x)>b_Q\right\}$. Then the following equality is true:
$$
\int_E\left|b(x)-b_Q\right|dx=\int_F\left|b(x)-b_Q\right| dx.
$$
\end{lemma}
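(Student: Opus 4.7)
The plan is to exploit the defining property of the average $b_Q=\frac{1}{|Q|}\int_Q b(x)\,dx$, which immediately gives $\int_Q (b(x)-b_Q)\,dx=0$, and then split this vanishing integral across the partition $Q=E\sqcup F$ (the two sets are disjoint by definition, and every point of $Q$ satisfies either $b(x)\le b_Q$ or $b(x)>b_Q$, so they exhaust $Q$ up to a null set).

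On $E$ the integrand $b(x)-b_Q$ is non-positive, so $|b(x)-b_Q|=-(b(x)-b_Q)$; on $F$ it is positive, so $|b(x)-b_Q|=b(x)-b_Q$. Substituting these identities into the decomposition
\begin{equation*}
0=\int_Q (b(x)-b_Q)\,dx=\int_E (b(x)-b_Q)\,dx+\int_F (b(x)-b_Q)\,dx
\end{equation*}
rearranges directly into the desired equality. There is essentially no obstacle here: the result is a one-line consequence of the mean-value property of $b_Q$, and the only thing to be mildly careful about is that the boundary set $\{x\in Q:b(x)=b_Q\}$ is absorbed into $E$ by the definition (on which $b(x)-b_Q=0$ contributes nothing), so the split is clean without any measure-theoretic worries.
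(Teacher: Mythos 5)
Your proof is correct: the identity $\int_Q(b(x)-b_Q)\,dx=0$, split over the disjoint decomposition $Q=E\sqcup F$ together with the sign of $b(x)-b_Q$ on each piece, gives the equality at once. The paper itself offers no proof of this lemma (it is quoted from the reference of Bastero, Milman and Ruiz), and your argument is precisely the standard one underlying that citation, so there is nothing to add.
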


\section{Proofs of Theorems \ref{TH1.1}-\ref{TH1.3}}

\begin{proof}[Proof of Theorem \ref{TH1.1}]
(1) $\Rightarrow(2)$: Assume $b \in \dot{\Lambda}_{\beta}(\mathbb{R}^{n})$ and $b \geq 0$. For any locally integral function $f$, we have
\begin{align*}
|[b, M_{\alpha}](f)(x)|&=|b(x) M_{\alpha}(f)(x)-M_{\alpha}(b f)(x)|\\
&\leq\sup _{Q \ni x} \frac{1}{|Q|^{1-\alpha/n}} \int_{Q}|b(x)-b(y) \| f(y)| dy\\
&\leq C\|b\|_{\dot{\Lambda}_{\beta}}\sup _{Q \ni x} \frac{1}{|Q|^{1-(\alpha+\beta)/ n}} \int_{Q}| f(y)| dy\\
&\leq C\|b\|_{\dot{\Lambda}_{\beta}} M_{\alpha+\beta}(f)(x).
\end{align*}
By Lemma \ref{le2.3}, we obtain that $[b,  M_{\alpha}]$ is bounded from $(E_{p}^{q})_{t}(\mathbb{R}^{n})$ to $(E_{r}^{s})_{t}(\mathbb{R}^{n})$.

$(2) \Rightarrow(3)$: We divide the proof into two cases based on the scope of $\alpha$.

{\bf Case 1.} Assume $0 <\alpha<n$. For any fixed cube Q,
$$
\begin{aligned}
& \frac{1}{|Q|^{\beta/n+1/s}} \|b(\cdot)-M_{Q}(b)(\cdot)\|_{(E_{r}^{s})_{t}(Q)} \\
& \leq\frac{1}{|Q|^{\beta/n+1/s}} \left\|b(\cdot)-|Q|^{-\alpha / n} M_{\alpha, Q}(b)(\cdot)\right\|_{(E_{r}^{s})_{t}(Q)} \\
& \quad+\frac{1}{|Q|^{\beta/n+1/s}} \left\||Q|^{-\alpha / n} M_{\alpha, Q}(b)(\cdot)-M_{Q}(b)(\cdot)\right\|_{(E_{r}^{s})_{t}(Q)} \\
& :=I+II .
\end{aligned}
$$
For $I$. By the definition of $M_{\alpha, Q}$, we can see
\begin{equation}\label{eq3.1}
M_{\alpha, Q}\left(\chi_{Q}\right)(x)=|Q|^{\alpha / n},  \text { for all } x \in Q. \tag{3.1}
\end{equation}
Using Lemma \ref{le2.2}, for any $x \in Q$, we have
$$
\begin{aligned}
M_{\alpha}\left(\chi_{Q}\right)(x)=M_{\alpha, Q}\left(\chi_{Q}\right)(x)=|Q|^{\alpha / n},
M_{\alpha}\left(b \chi_{Q}\right)(x)=M_{\alpha, Q}(b)(x).
\end{aligned}
$$
Thus, for any $x \in Q$,
$$
\begin{aligned}
b(x)-|Q|^{-\alpha / n} M_{\alpha, Q}(b)(x) & =|Q|^{-\alpha / n}\left(b(x)|Q|^{\alpha / n}-M_{\alpha, Q}(b)(x)\right) \\
& =|Q|^{-\alpha / n}\left(b(x) M_{\alpha}\left(\chi_{Q}\right)(x)-M_{\alpha}\left(b \chi_{Q}\right)(x)\right) \\
& =|Q|^{-\alpha / n}\left[b, M_{\alpha}\right]\left(\chi_{Q}\right)(x) .
\end{aligned}
$$
Since $\left[b, M_{\alpha}\right]$ is bounded from $(E_{p}^{q})_{t}(\mathbb{R}^{n})$ to $(E_{r}^{s})_{t}(\mathbb{R}^{n})$, then by Lemma \ref{le2.4} and noting that $(\alpha+\beta) / n=1/q-1/s$, we have
$$
\begin{aligned}
I & =|Q|^{-\beta / n-1/s} \left\|b(\cdot)-|Q|^{-\alpha / n} M_{\alpha, Q}(b)(\cdot)\right\|_{(E_{r}^{s})_{t}(Q)} \\
& =|Q|^{-(\alpha+\beta) / n-1/s} \left\|\left[b, M_{\alpha}\right]\left(\chi_{Q}\right)(\cdot)\right\|_{(E_{r}^{s})_{t}(Q)} \\
& \leq C|Q|^{-(\alpha+\beta) / n-1/s} \left\|\chi_{Q}\right\|_{(E_{p}^{q})_{t}(\mathbb{R}^{n})} \\
& \leq C.
\end{aligned}
$$
Next, we estimate $II$. Similar to (\ref{eq3.1}), by Lemma 2.3 and noting that
$$
M_{Q}\left(\chi_{Q}\right)(x)=\chi_{Q}(x), \text { for all } x \in Q,
$$
it is easy to see
\begin{equation}\label{eq3.2}
M\left(\chi_{Q}\right)(x)=\chi_{Q}(x) ~\text {and} ~ M\left(b \chi_{Q}\right)(x)=M_{Q}(b)(x), \text{for any } x \in Q.\tag{3.2}
\end{equation}
Then, by (\ref{eq3.1}) and (\ref{eq3.2}), for any $x \in Q$, we obtain
$$
\begin{aligned}
&\left|| Q|^{-\alpha / n} M_{\alpha, Q}(b)(x)-M_{Q}(b)(x) \right|\\
&\leq  |Q|^{-\alpha / n}\left|M_{\alpha}\left(b \chi_{Q}\right)(x)-|b(x)|M_{\alpha}\left(\chi_{Q}\right)(x)\right| \\
&\quad+|Q|^{-\alpha / n}|| b(x)\left|M_{\alpha}\left(\chi_{Q}\right)(x)-M_{\alpha}\left(\chi_{Q}\right)(x) M\left(b \chi_{Q}\right)(x)\right| \\
&= |Q|^{-\alpha / n}\left|M_{\alpha}\left(|b| \chi_{Q}\right)(x)-\right| b(x)\left|M_{\alpha}\left(\chi_{Q}\right)(x)\right| \\
&\quad+|Q|^{-\alpha / n} M_{\alpha}\left(\chi_{Q}\right)(x)|| b(x)\left|M\left(\chi_{Q}\right)(x)-M\left(b \chi_{Q}\right)(x)\right| \\
&= |Q|^{-\alpha / n}\left|[|b|, M_{\alpha}](\chi_{Q})(x)\right|+\left|[|b|, M]\left(\chi_{Q}\right)(x)\right| .
\end{aligned}
$$
Since $\left[b, M_{\alpha}\right]$ is bounded from $(E_{p}^{q})_{t}(\mathbb{R}^{n})$ to $(E_{r}^{s})_{t}(\mathbb{R}^{n})$ and we can see that $b \in \dot{\Lambda}_{\beta}\left(\mathbb{R}^{n}\right)$ implies $|b| \in \dot{\Lambda}_{\beta}\left(\mathbb{R}^{n}\right)$.
By the definitions of $\left[b, M_{\alpha}\right]$ and $M_{\alpha}$, we have, for any $x \in Q$,
$$
\begin{aligned}
\left|\left[|b|, M_{\alpha}\right]\left(\chi_{Q}\right)(x)\right| &  \leq \sup _{Q^{\prime} \ni x} \frac{1}{\left|Q^{\prime}\right|^{1-\alpha / n}} \int_{Q^{\prime}}\left|b(x)-b(y) \| \chi_{Q}(y)\right| d y \\
& \leq\|b\|_{\dot{\Lambda}_{\beta}\left(\mathbb{R}^{n}\right)} \sup _{Q^{\prime} \ni x} \frac{1}{\left|Q^{\prime}\right|^{1-(\alpha+\beta) / n}} \int_{Q^{\prime}}\left|\chi_{Q}(y)\right| d y \\
& \leq\|b\|_{\dot{\Lambda}_{\beta}} M_{\alpha+\beta}\left(\chi_{Q}\right)(x) \\
& =\|b\|_{\dot{\Lambda}_{\beta}}|Q|^{(\alpha+\beta) / n} \chi_{Q}(x).
\end{aligned}
$$
Similarly, we can see
$$
\left|[|b|, M](\chi_{Q})(x)\right| \leq\|b\|_{\dot{\Lambda}_{\beta}}|Q|^{\beta / n} \chi_{Q}(x),  \text { for any } x \in Q .
$$
Thus,  for any $x \in Q$,
$$
\left|| Q|^{-\alpha / n} M_{\alpha, Q}(b)(x)-M_{Q}(b)(x)\right|\leq C\|b\|_{\dot{\Lambda}_{\beta}}| Q|^{\beta / n} \chi_{Q}(x) .
$$
Then, by Lemma \ref{le2.4}, we have
$$
\begin{aligned}
II & =|Q|^{-\beta / n-1/s} \left\||Q|^{-\alpha / n} M_{\alpha, Q}(b)(\cdot)-M_{Q}(b)(\cdot)\right\|_{(E_{r}^{s})_{t}(Q)} \\
& \leq C |Q|^{-1/s}\left\|\chi_{Q}\right\|_{(E_{r}^{s})_{t}(Q)} \\
& \leq C.
\end{aligned}
$$
This gives the desired estimate
$$
|Q|^{-\beta / n-1/s} \left\|b(\cdot)-M_{Q}(b)(\cdot)\right\|_{(E_{r}^{s})_{t}(Q)} \leq C,
$$
which leads us to (\ref{eq1.1}) since $Q$ is arbitrary and the constant $C$ is dependent of $Q$.

{\bf Case 2.} Assume $\alpha=0$. For any fixed cube $Q$ and any $x \in Q$, by (\ref{eq3.2}), we can see
$$
b(x)-M_{Q}(b)(x)=b(x) M\left(\chi_{Q}\right)(x)-M\left(b \chi_{Q}\right)(x)=[b, M]\left(\chi_{Q}\right)(x).
$$
Assume that $[b, M]$ is bounded from $(E_{p}^{q})_{t}(\mathbb{R}^{n})$ to $(E_{r}^{s})_{t}(\mathbb{R}^{n})$ and $\beta / n=1/q-1/s$, then by Lemma \ref{le2.4}, we have
$$
\begin{aligned}
& |Q|^{-\beta / n-1/s} \left\|b(\cdot)-M_{Q}(b)(\cdot)\right\|_{(E_{r}^{s})_{t}(Q)} \\
& =|Q|^{-\beta / n-1/s}\left\|[b, M]\left(\chi_{Q}\right)(\cdot)\right\|_{(E_{r}^{s})_{t}(Q)} \\
&  \leq C|Q|^{-\beta / n-1/s}\left\|\chi_{Q}\right\|_{(E_{r}^{s})_{t}(\mathbb{R}^{n})} \\
& \leq C,
\end{aligned}
$$
which implies (\ref{eq1.1}).

(3) $\Rightarrow$ (4): Assume (\ref{eq1.1}) holds, then for any fixed cube $Q$, by H\"{o}lder's inequality and (\ref{eq1.1}), we can see
$$
\begin{aligned}
&\frac{1}{|Q|^{1+\beta / n}} \int_{Q}\left|b(x)-M_{Q}(b)(x)\right| d x\\
&\leq \frac{C}{|Q|^{1+\beta/n}} \left\|b(\cdot)-M_{Q}(b)(\cdot)\right\|_{(E_{r}^{s})_{t}(Q)} \|\chi_{Q}\|_{(E_{r'}^{s'})_{t}(\mathbb{R}^{n})}\\
&\leq \frac{C}{|Q|^{\beta/n+1/s}} \left\|b(\cdot)-M_{Q}(b)(\cdot)\right\|_{(E_{r}^{s})_{t}(Q)}\\
&\leq C,
\end{aligned}
$$
where the constant $C$ is independent of $Q$. Thus we have (\ref{eq1.2}).

(4) $\Rightarrow$ (1): To prove $b \in \dot{\Lambda}_\beta\left(\mathbb{R}^n\right)$, by Lemma \ref{le2.1}, it suffices to show that there is a constant $C>0$ such that for any fixed cube $Q$,
$$
\frac{1}{|Q|^{1+\beta / n}} \int_Q\left|b(x)-b_Q\right| dx \leq C .
$$
For any fixed cube $Q$, let $E=\left\{x \in Q: b(x) \leq b_Q\right\}$ and $F=\left\{x \in Q: b(x)>b_Q\right\}$.
Since for any $x \in E$, we have $b(x) \leq b_Q \leq M_Q(b)(x)$, then
\begin{equation}\label{eq3.3}
\left|b(x)-b_Q\right| \leq\left|b(x)-M_Q(b)(x)\right| .\tag{3.3}
\end{equation}
By Lemma \ref{le2.5} and (\ref{eq3.3}), we obtain
$$
\begin{aligned}
\frac{1}{|Q|^{1+\beta / n}} \int_Q\left|b(x)-b_Q\right| \mathrm{d} x
& =\frac{2}{|Q|^{1+\beta / n}} \int_E\left|b(x)-b_Q\right| \mathrm{d} x \\
& \leq \frac{2}{|Q|^{1+\beta / n}} \int_E\left|b(x)-M_Q(b)(x)\right| \mathrm{d} x \\
& \leq \frac{2}{|Q|^{1+\beta / n}} \int_Q\left|b(x)-M_Q(b)(x)\right| \mathrm{d} x\\
& \leq C.
\end{aligned}
$$
Thus we obtain $b \in \dot{\Lambda}_\beta\left(\mathbb{R}^n\right)$.
Next, we will prove $b \geq 0$, it suffices to show $b^{-}=0$, where $b^{-}=-\min \{b, 0\}$. Let $b^{+}=|b|-b^{-}$, then $b=b^{+}-b^{-}$. For any fixed cube $Q$ and $x \in Q$, we observe that
$$
0 \leq b^{+}(x) \leq|b(x)| \leq M_Q(b)(x),
$$
then it is easy to see
$$
0 \leq b^{-}(x) \leq M_Q(b)(x)-b^{+}(x)+b^{-}(x)=M_Q(b)(x)-b(x).
$$
Combining with the above estimates and (\ref{eq1.2}), we obtain
$$
\begin{aligned}
\frac{1}{|Q|} \int_Q b^{-}(x) \mathrm{d} x & \leq \frac{1}{|Q|} \int_Q\left|M_Q(b)(x)-b(x)\right| \\
& \leq|Q|^{\beta / n}\left(\frac{1}{|Q|^{1+\beta/n}} \int_Q\left|b(x)-M_Q(b)(x)\right| dx\right) \\
& \leq C|Q|^{\beta / n} .
\end{aligned}
$$
Thus, $b^{-}=0$ follows from Lebesgue's differentiation theorem.

This completes the proof of Theorem \ref{TH1.1}.
\end{proof}

\begin{proof}[Proof of Theorem \ref{TH1.2}] (1) $\Rightarrow$ (2): Assume $b \in \dot{\Lambda}_\beta(\mathbb{R}^n)$. For any fixed cube $Q\subset \mathbb{R}^n$, we have
$$
\begin{aligned}
M_{\alpha,b}(f)(x) & =\sup _{Q \ni x}\frac{1}{|Q|^{1-\alpha / n}} \int_Q|b(x)-b(y) \| f(y)| d y \\
& \leq C\|b\|_{\dot{\Lambda}_\beta(\mathbb{R}^n)} M_{\alpha+\beta} f(x).
\end{aligned}
$$
By Lemma \ref{le2.3}, we obtain that $M_{\alpha,b}$ is bounded from $(E_{p}^{q})_{t}(\mathbb{R}^{n})$ to $(E_{r}^{s})_{t}(\mathbb{R}^{n})$.

$(2) \Rightarrow(3)$: For any fixed cube $Q\subset \mathbb{R}^n$ and all $x \in Q$, we have
$$
\begin{aligned}
\left|b(x)-b_{Q}\right| & \leq \frac{1}{|Q|} \int_{Q}|b(x)-b(y)| d y \\
& =\frac{1}{|Q|^{\alpha / n}} \frac{1}{|Q|^{1-\alpha / n}} \int_{Q}|b(x)-b(y)| \chi_{Q}(y) d y \\
& \leq|Q|^{-\alpha / n} M_{\alpha, b}\left(\chi_{Q}\right)(x) .
\end{aligned}
$$
Since $M_{\alpha,b}$ is bounded from $(E_{p}^{q})_{t}(\mathbb{R}^{n})$ to $(E_{r}^{s})_{t}(\mathbb{R}^{n})$, then by Lemma \ref{le2.4} and noting that $(\alpha+\beta)/n=1/q-1/s$, we obtain
$$
\begin{aligned}
 \frac{1}{|Q|^{\beta/ n+1/s}}\left\|b(\cdot)-b_{Q}\right\|_{E_{r}^{s})_{t}(Q)}
& \leq|Q|^{-(\alpha+\beta) / n-1/s} \left\|M_{\alpha, b}\left(\chi_{Q}\right)(\cdot)\right\|_{(E_{r}^{s})_{t}(Q)} \\
&  \leq C|Q|^{-(\alpha+\beta) / n-1/s} \left\|\chi_{Q}\right\|_{(E_{p}^{q})_{t}\left(\mathbb{R}^{n}\right)} \\
& \leq C,
\end{aligned}
$$
which implies (\ref{eq1.3}) since the cube $Q\subset\mathbb{R}^{n}$ is arbitrary.

(3) $\Rightarrow$ (4): Assume (\ref{eq1.3}) holds, we will prove (\ref{eq1.4}). For any fixed cube $Q$, by H\"{o}lder's inequality and Lemma \ref{le2.4}, it is easy to see
$$
\begin{aligned}
\frac{1}{|Q|^{1+\beta / n}} \int_{Q}\left|b(x)-b_{Q}\right| d x
& \leq \frac{C}{|Q|^{1+\beta / n}} \left\|b(\cdot)-b_{Q}\right\|_{(E_{r}^{s})_{t}(Q)} \|\chi_{Q}\|_{(E_{r'}^{s'})_{t}(\mathbb{R}^{n})}\\
& \leq \frac{C}{|Q|^{\beta/n+1/s}} \left\|b(\cdot)-b_{Q}\right\|_{(E_{r}^{s})_{t}(Q)}\\
& \leq C.
\end{aligned}
$$

(4) $\Rightarrow$ (1): It follows from Lemma \ref{le2.1} directly, thus we omit the details.

The proof of Theorem \ref{TH1.2} is  finished.
\end{proof}
\begin{proof}[Proof of Theorem \ref{TH1.3}]
(1) $\Rightarrow$ (2): Assume $b \in \dot{\Lambda}_{\beta}\left(\mathbb{R}^{n}\right)$ and $b \geq 0$. For any locally integral function $f$, the following estimate was given in \cite{Z2}:
$$
\left|[b, M^{\sharp}] f(x)\right| \leq C\|b\|_{\dot{\Lambda}_{\beta}} M_{\beta}(f)(x).
$$
Then, by Lemma \ref{le2.3}, we obtain that $\left[b, M^{\sharp}\right]$ is bounded from $(E_{p}^{q})_{t}(\mathbb{R}^{n})$ to $(E_{r}^{s})_{t}(\mathbb{R}^{n})$.

$(2) \Rightarrow(3)$: Assume $[b, M^{\sharp}]$ is bounded from $(E_{p}^{q})_{t}(\mathbb{R}^{n})$ to $(E_{r}^{s})_{t}(\mathbb{R}^{n})$, we will prove (\ref{eq1.5}). For any fixed cube $Q$, we have (see \cite{BMR} for details)
$$
M^{\sharp}\left(\chi_{Q}\right)(x)=1 / 2,  \text { for all } x \in Q \text {. }
$$
Then, for all $x \in Q$,
$$
\begin{aligned}
b(x)-2 M^{\sharp}\left(b \chi_{Q}\right)(x)
& =2\left(b(x) M^{\sharp}\left(\chi_{Q}\right)(x)-M^{\sharp}\left(b \chi_{Q}\right)(x)\right) \\
& =2\left[b, M^{\sharp}\right]\left(\chi_{Q}\right)(x) .
\end{aligned}
$$
Since $\left[b, M^{\sharp}\right]$ is bounded from $(E_{p}^{q})_{t}(\mathbb{R}^{n})$ to $(E_{r}^{s})_{t}(\mathbb{R}^{n})$, then  applying Lemma \ref{le2.4} and noting that $\beta/n=1/q-1/s$, we obtain
$$
\begin{aligned}
& |Q|^{-\beta / n-1/s}\left\|b(\cdot)-2 M^{\sharp}\left(b \chi_{Q}\right)(\cdot)\right\|_{(E_{r}^{s})_{t}(Q)} \\
& =2|Q|^{-\beta / n-1/s}\left\|\left[b, M^{\sharp}\right]\left(\chi_{Q}\right)\right\|_{(E_{r}^{s})_{t}(Q)} \\
&  \leq C|Q|^{-\beta / n-1/s} \left\|\chi_{Q}\right\|_{(E_{p}^{q})_{t}\left(\mathbb{R}^{n}\right)} \\
&  \leq C,
\end{aligned}
$$
where the constant $C$ is independent of $Q$. Then we achieve (\ref{eq1.5}).

$(3) \Rightarrow(4)$: Assume (\ref{eq1.5}) holds, we will prove (\ref{eq1.6}). For any fixed cube $Q$, it follows from H\"{o}lder's inequality and (\ref{eq1.5}) that
$$
\begin{aligned}
& \frac{1}{|Q|^{1+\beta / n}} \int_{Q}\left|b(x)-2 M^{\sharp}\left(b \chi_{Q}\right)(x)\right| d x \\
& \leq C|Q|^{-\beta / n-1/s} \left\|b(\cdot)-2 M^{\sharp}\left(b \chi_{Q}\right)(\cdot)\right\|_{(E_{p}^{q})_{t}(Q)} \\
& \leq C,
\end{aligned}
$$
which implies (\ref{eq1.6}) since the constant $C$ is independent of $Q$.

$(4) \Longrightarrow(1)$: We first prove $b \in \dot{\Lambda}_{\beta}\left(\mathbb{R}^{n}\right)$. For any fixed cube $Q$, the following estimate was given in \cite{BMR}:
$$
\frac{1}{|Q|} \int_{Q}\left|b(x)-b_{Q}\right| d x \leq \frac{2}{|Q|} \int_{Q}\left|b(x)-2 M^{\sharp}\left(b \chi_{Q}\right)(x)\right| d x .
$$
Then by (\ref{eq1.6}), we have
$$
\frac{1}{|Q|^{1+\beta / n}} \int_{Q}\left|b(x)-b_{Q}\right| d x \leq \frac{2}{|Q|^{1+\beta / n}} \int_{Q}\left|b(x)-2 M^{\sharp}\left(b \chi_{Q}\right)(x)\right| d x \leq C,
$$
which leads to $b \in \dot{\Lambda}_{\beta}\left(\mathbb{R}^{n}\right)$ by Lemma \ref{le2.1}.

Now, let us prove $b \geq 0$. It suffices to show $b^{-}=0$, where $b^{-}=-\min \{b, 0\}$ and let $b^{+}=|b|-b^{-}$. For any fixed cube $Q$, we have (see \cite{BMR} for details)
$$
\left|b_{Q}\right| \leq 2 M^{\sharp}\left(b \chi_{Q}\right)(x) \text {, for any } x \in Q \text {. }
$$
Then, for all $x \in Q$,
$$
2 M^{\sharp}\left(b \chi_{Q}\right)(x)-b(x) \geq\left|b_{Q}\right|-b(x)=\left|b_{Q}\right|-b^{+}(x)+b^{-}(x).
$$
By (\ref{eq1.6}), we obtain
\begin{equation}\label{eq3.4}
\left|b_{Q}\right|-\frac{1}{|Q|} \int_{Q} b^{+}(x) d x+\frac{1}{|Q|} \int_{Q} b^{-}(x) d x \leq C|Q|^{\beta / n},\tag{3.4}
\end{equation}
where the constant $C$ is independent of $Q$.

Let the side length of $Q$ tends to 0 (then $|Q| \rightarrow 0$ ) with $x \in Q$. By Lebesgue's differentiation theorem, we obtain that the limit of the left-hand side of (\ref{eq3.4}) equals to
$$
|b(x)|-b^{+}(x)+b^{-}(x)=2 b^{-}(x)=2\left|b^{-}(x)\right|.
$$
Moreover, the right-hand side of (\ref{eq3.4}) tends to $0$. Thus, we have $b^{-}=0$.

The proof of Theorem \ref{TH1.3} is completed.
\end{proof}

\vspace {1cm}


\begin{thebibliography}{99}

\bibitem{AM}
{\it P. Auscher and M. Mourgoglou}, Representation and uniqueness for boundary value elliptic problems
via frst order systems, Rev. Mat. Iberoam., {\bf 35} (2019), No. 1, 241--315.
\bibitem{AP}
{\it P. Auscher and C. Prisuelos-Arribas}, Tent space boundedness via extrapolation, Math. Z., {\bf 286} (2017), No. 3-4, 1575-1604.
\bibitem{BMR}
{\it J. Bastero, M. Milman and F. J. Ruiz}, Commutators for the maximal and sharp functions, Proc. Am. Math. Soc., {\bf 128} (2000), No. 11, 3329-3334.
\bibitem{CRW}
{\it R. R. Coifman, R. Rochberg and G. Weiss}, Factorization theorems for Hardy spaces in several variables,  Ann. Math.,  {\bf 103} (1976), No. 3,  611-635.
\bibitem{DS}
{\it R. A. Devore and R. C. Sharpley}, Maximal functions measuring smoothness, Mem. Am. Math. Soc., {\bf 47} (1984), No. 293, 1-115.
\bibitem{FS}
{\it C. Fefferman and E. M. Stein},  $H_{p}$ spaces of several variables, Acta Math., {\bf129} (1972),  No. 1, 137-193.
\bibitem{HZ}{\it L. Hang and J. Zhou}, Weighted norm inequalities for Calder\'{o}n-Zygmund operators of $\phi$-type and their commutators, J. Contemp. Math. Anal., {\bf 58} (2023), No. 3, 152-166.
\bibitem{J}
{\it S. Janson}, Mean oscillation and commutators of singular integral operators, Ark. Mat., {\bf 16} (1978), No. 1-2, 263-270.
\bibitem{JTW}
{\it S. Janson, M. Taibleson and G. Weiss}, Elementary characterization of the Morrey-Campanato spaces, Lecture Notes Math., {\bf 992}  (1983), 101-114.
\bibitem{JN}
{\it F. John and L. Nirenberg}, On functions of bounded mean oscillation, Commun. Pur. Appl. Math., {\bf 14}  (1961), No. 3, 415-426.
\bibitem{L}
{\it G. Lu}, Parameter Marcinkiewicz integral on non-homogeneous Morrey space with variable
exponent, U. Politeh. Buch. Ser. A.,  {\bf 83} (2021), No. 1, 89-98.
\bibitem{LWZ}
{\it Y. Lu, S. Wang and J. Zhou}, Some estimates of multilinear operators on weighted amalgam spaces $(L^{p}, L_{w}^{q})_{t}(\mathbb{R}^{n})$, Acta Math. Hung., {\bf 168} (2022), No. 1, 113-143.
\bibitem{LZW}
{\it Y. Lu, J. Zhou and S. Wang}, Necessary and sufficient conditions for boundedness of commutators associated with Calder\'{o}n-Zygmund operators on slice spaces, Ann.  Funct. Anal., {\bf 13} (2022), No. 4, 1-19.
\bibitem{P}
{\it M. Paluszy\'{n}ski}, Characterization of the Besov spaces via the commutator operator of Coifman, Rochberg and Weiss, Indiana Univ. Math. J.,  {\bf 44} (1995), No. 1, 1-17.
\bibitem{S}
{\it A. Sandik\c{c}i},  Characterization of the boundedness of $\tau$-Wigner transform on Hardy and BMO spaces, U. Politeh. Buch. Ser. A., {\bf 85} (2023),  No. 1, 43-52.
\bibitem{YZ}
{\it H. Yang and J. Zhou}, Some characterizations of Lipschitz spaces via commutators of the Hardy-Littlewood maximal operator on slice spaces, Proc. Ro. Acad. Ser. A.,  {\bf 24} (2023), No. 3, 223-230.
\bibitem{ZS}
{\it M. Zamfir and C. Serbanescu}, The commutator of two weakly decomposable operators, U. Politeh. Buch. Ser. A., {\bf 81} (2019),  No. 1, 13-18.
\bibitem{Z}
{\it P. Zhang}, Characterization of Lipschitz spaces via commutators of the Hardy-Littlewood maximal function, C. R. Math., {\bf 355} (2017), No. 3, 336-344.
\bibitem{Z2}
{\it P. Zhang}, Characterization of boundedness of some commutators of maximal functions in terms of Lipschitz spaces, Anal. Math. Phys., {\bf 9} (2019), No. 3, 1411-1427.
\bibitem{ZW}
{\it P. Zhang and J. L. Wu}, Commutators of the fractional maximal functions. Acta Math. Sinica, Chin. Ser., {\bf 52} (2009), 1235-1238.




\end{thebibliography}
\end{document}